\theoremstyle{plain}
\newtheorem{thm}{Theorem}
\newtheorem{lem}[thm]{Lemma}
\newtheorem{cor}[thm]{Corollary}
\theoremstyle{definition}
\newtheorem{defn}[thm]{Definition}
\newtheorem{ex}[thm]{Example}
\title{A description of the Parry-Sullivan number of a graph using circuits}
\author{Chris Smith}
\address{Department of Mathematics, University of Colorado, Colorado Springs CO 80933 U.S.A.}
\email{cdsmith@gmail.com}
\begin{document}

\maketitle

\begin{abstract}
In this short note, we give a description of the Parry-Sullivan number of a graph in terms of
the cycles in the graph.  This tool is occasionally useful in reasoning about the
Parry-Sullivan numbers of graphs.
\end{abstract}

Given a graph $E$ with $n$ vertices, one may define the incidence matrix $A_E$ as the
$n \times n$ matrix wherein each entry $(A_E)_{ij}$ is defined to be the number of edges in
$E$ from the $i$th vertex to the $j$th vertex.  Parry and Sullivan showed the quantity
$\det(I-A_E)$, now known as the Parry-Sullivan number of the graph and denoted
$\mathcal{PS}(E)$, is an invariant of the flow equivalence class of the subshift of finite
type induced by $E$ \cite{ParrySullivan}.  (Equivalently, some sources choose to view
$\mathcal{PS}(E)$ as $\det(I-A_E^t)$.)  In working with this invariant, it is sometimes
convenient to view the Parry-Sullivan number of a graph in terms of the structure of cycles
of the graph, rather than as a determinant calculation.  We establish such a characterization
here.

Some basic definitions are given here and used throughout this note.

\begin{defn}

A \textbf{graph} $E$ (also known as a directed graph) is a 4-tuple $(E^0, E^1, r, s)$, where
$E^0$ is a set of vertices, $E^1$ is a set of edges, and $r, s : E^1 \to E^0$ associate each
edge with its range and source, respectively.

\begin{itemize}
\item A \textbf{directed cycle} of a graph $E$ is a sequence of edges $e_1 e_2 \cdots e_m$ such that
$s(e_1) = r(e_m)$, and $s(e_i) \neq s(e_j)$ whenever $i \neq j$.  In particular, the choice of
starting vertex, $s(e_1)$, distinguishes two cycles that follow the same edges.

\item A \textbf{directed circuit} in a graph $E$ is a finite set of edges $C \subseteq E^1$ with the
property that the edges in $C$ can be arranged into a directed cycle.

\item Let $\mathcal{C}$ be a set of directed circuits.  $\mathcal{C}$ is \textbf{vertex-disjoint} if
for any $C_1, C_2 \in \mathcal{C}$, the sets $\{ s(e) | e \in C_1 \}$ and $\{ s(e) | e \in C_2 \}$
are disjoint.
\end{itemize}
\end{defn}

In this note, by the words cycle and circuit, we always mean directed cycles and directed circuits.
We now view circuits as inducing a permutation on the vertices of a graph, in the obvious way:

\begin{defn}
Let $\mathcal{C}$ be a set of vertex-disjoint circuits of a graph $E$.  The \textbf{permutation
$\rho_{\mathcal{C}} : E^0 \to E^0$ induced by $\mathcal{C}$} is defined as follows.  If there
exists $e \in \bigcup \mathcal{C}$ such that $s(e) = v$, then $\rho_{\mathcal{C}}(v) = r(e)$.
Otherwise, $\rho_{\mathcal{C}}(v) = v$.
\end{defn}

Then $\rho_\mathcal{C}$ is well-defined because $\mathcal{C}$ is vertex-disjoint, and it is easily
seen to be a permutation of $E^0$.  The curcuits having more than one edge in $\mathcal{C}$ correspond
with cycles in the representation of $\rho_{\mathcal{C}}$ as a product of disjoint cycles.

Several distinct such vertex-disjoint sets of curcuits may induce the same permutation of $E^0$.  The
induced permutation $\rho_{\mathcal{C}}$ can be viewed as defining an equivalence relation on
vertex-disjoint sets of circuits, as follows:

\begin{defn}
The equivalence relation $\sim_\rho$ is defined such that $\mathcal{C}_1 \sim_\rho \mathcal{C}_2$
if and only if $\rho_{\mathcal{C}_1} = \rho_{\mathcal{C}_2}$.
\end{defn}

A quick lemma about permutations will be useful later.

\begin{lem}
Let $\rho \in S_n$.  If $\rho$ is the product of $m$ disjoint cycles
$\sigma_1\sigma_2\cdots\sigma_m$, and $\rho$ has $k$ fixed points, then
$\rho$ is a product of $n - (m + k)$ transpositions.
\end{lem}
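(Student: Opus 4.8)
The plan is to reduce everything to the elementary fact that a single $\ell$-cycle is a product of $\ell - 1$ transpositions, and then to do a little bookkeeping on the points of $\{1, \dots, n\}$, splitting them into those moved by $\rho$ and the $k$ points it fixes. Here I read the hypothesis in the natural way: $\sigma_1, \dots, \sigma_m$ are the nontrivial cycles (each of length at least $2$) appearing in the disjoint-cycle decomposition of $\rho$, and the $k$ fixed points account for the trivial $1$-cycles, so that $m + k$ is the total number of cycles including trivial ones.

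First I would record the identity
\[
(a_1\, a_2\, \cdots\, a_\ell) = (a_1\, a_\ell)(a_1\, a_{\ell-1}) \cdots (a_1\, a_2),
\]
which exhibits any $\ell$-cycle as a product of exactly $\ell - 1$ transpositions; this is checked by evaluating both sides on each $a_i$. Writing $\ell_i$ for the length of $\sigma_i$, I apply this to each $\sigma_i$ and concatenate the resulting factorizations. Since $\rho = \sigma_1 \sigma_2 \cdots \sigma_m$, this presents $\rho$ as a product of $\sum_{i=1}^m (\ell_i - 1)$ transpositions.

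It then remains to identify $\sum_{i=1}^m (\ell_i - 1)$ with $n - (m + k)$. Because the cycles $\sigma_i$ are disjoint, the sets of points they move are pairwise disjoint, and their union is exactly the set of points not fixed by $\rho$; as $\rho$ has $k$ fixed points among the $n$ total, this gives $\sum_{i=1}^m \ell_i = n - k$. Hence $\sum_{i=1}^m (\ell_i - 1) = (n - k) - m = n - (m + k)$, as desired. There is no genuine obstacle in this argument; the only thing to be careful about is the counting convention just mentioned, so that the tally of moved points versus fixed points lines up with the count $m + k$ of cycles.
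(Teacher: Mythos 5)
Your proposal is correct and follows essentially the same route as the paper: factor each disjoint cycle of length $\ell_i$ into $\ell_i - 1$ transpositions, then use the fact that the moved points partition into the cycles' supports to get $\sum_i \ell_i = n - k$, hence $n - (m+k)$ transpositions in total. Your explicit handling of the convention that the $\sigma_i$ are the nontrivial cycles (with fixed points counted separately) is a welcome clarification but not a departure from the paper's argument.
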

\begin{proof}
A cycle of length $j$ can be obtained by a product of $j-1$ transpositions.  By
repeating this for each disjoint cycle in $\rho$, one can express $\rho$ as the
product of $\sum_{i=1}^m (|\sigma_i| - 1)$ transpositions.  But each
$1 \leq a \leq n$ is either a member of some $\sigma_i$ or is a fixed point
of $\rho$, so $\sum_{i=1}^m |\sigma_i| = n - k$, and rearranging the sum gives
$\rho$ as a product of
\[
\sum_{i=1}^m |\sigma_i| - \sum_{i=1}^m 1 = n - (m + k)
\]
transpositions, as desired.
\end{proof}

Given some finite graph $E$, the main result in this note is that
$\mathcal{PS}(E)$ is equal to the number of even-sized vertex-disjoint sets of circuits minus the
number of odd-sized vertex-disjoint sets of circuits in $E$.  We proceed by counting this value
within each equivalence class with respect to $\sim_\rho$ separately, and then adding them.

\begin{lem}
Let $E$ be a finite directed graph, and $\rho$ be a permutation of $E^0$.  Furthermore, let:
\begin{itemize}
\item $\mathscr{C}_\rho$ be the set of all vertex-disjoint sets of circuits that induce the permutation $\rho$.
\item $\mathscr{C}_\rho^e$ be those elements of $\mathscr{C}_\rho$ having an even number of cycles.
\item $\mathscr{C}_\rho^o$ be those elements of $\mathscr{C}_\rho$ having an odd number of cycles.
\end{itemize}
The signed elementary product of $I - A_E$ corresponding to the permutation $\rho$ is equal to
$|\mathscr{C}_\rho^e| - |\mathscr{C}_\rho^o|$.
\end{lem}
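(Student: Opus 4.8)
The plan is to expand both quantities into sums indexed by subsets of the fixed-point set of $\rho$ and observe that they agree term by term. Fix the permutation $\rho$ of $E^0$, write $n = |E^0|$, let $F \subseteq E^0$ be the set of fixed points of $\rho$ with $k = |F|$, and let $m$ be the number of cycles of length at least two in the disjoint-cycle decomposition of $\rho$. On the determinant side, the signed elementary product attached to $\rho$ is $\operatorname{sgn}(\rho)\prod_{v\in E^0}(I-A_E)_{v,\rho(v)}$. Separating the factors according to whether $\rho$ fixes $v$: each factor with $\rho(v)\neq v$ equals $-(A_E)_{v,\rho(v)}$ (the Kronecker delta vanishes), and each factor with $\rho(v)=v$ equals $1-(A_E)_{vv}$, so the product is
\[
\operatorname{sgn}(\rho)\,(-1)^{\,n-k}\Bigl(\prod_{v\notin F}(A_E)_{v,\rho(v)}\Bigr)\prod_{v\in F}\bigl(1-(A_E)_{vv}\bigr).
\]
By the preceding lemma, $\operatorname{sgn}(\rho)=(-1)^{\,n-(m+k)}$, whence $\operatorname{sgn}(\rho)(-1)^{\,n-k}=(-1)^{m}$; expanding $\prod_{v\in F}(1-(A_E)_{vv})=\sum_{S\subseteq F}(-1)^{|S|}\prod_{v\in S}(A_E)_{vv}$ then turns the signed elementary product into
\[
\sum_{S\subseteq F}(-1)^{\,m+|S|}\Bigl(\prod_{v\notin F}(A_E)_{v,\rho(v)}\Bigr)\prod_{v\in S}(A_E)_{vv}.
\]

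The combinatorial side rests on a structural description of $\mathscr{C}_\rho$: every $\mathcal{C}\in\mathscr{C}_\rho$ is uniquely the disjoint union of one circuit realizing each nontrivial cycle of $\rho$, together with one loop based at each vertex of some subset $S\subseteq F$. Indeed, a circuit of $\mathcal{C}$ with at least two edges, written as a cycle $f_1\cdots f_\ell$, forces $\rho$ to cyclically permute the (pairwise distinct) sources of the $f_i$, so it realizes a nontrivial cycle of $\rho$; conversely, for a nontrivial cycle $(v_1\cdots v_j)$ of $\rho$, the definition of $\rho_\mathcal{C}$ produces for each $i$ an edge of $\bigcup\mathcal{C}$ from $v_i$ to $v_{i+1}$, and vertex-disjointness forces all of these into the single circuit of $\mathcal{C}$ having $v_1$ among its sources, which therefore realizes exactly $(v_1\cdots v_j)$; and a circuit consisting of one edge must be a loop, so its base vertex lies in $F$. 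Now count: the circuits realizing a fixed nontrivial cycle $(v_1\cdots v_j)$ are obtained by choosing an edge for each transition $v_i\to v_{i+1}$ independently, giving $\prod_i(A_E)_{v_i,v_{i+1}}$ of them, so the number of choices for the nontrivial part is $\prod_{v\notin F}(A_E)_{v,\rho(v)}$; there are $(A_E)_{vv}$ loops available at each $v\in S$; and the resulting $\mathcal{C}$ has exactly $m+|S|$ circuits. Hence
\[
|\mathscr{C}_\rho^{e}|-|\mathscr{C}_\rho^{o}|=\sum_{S\subseteq F}(-1)^{\,m+|S|}\Bigl(\prod_{v\notin F}(A_E)_{v,\rho(v)}\Bigr)\prod_{v\in S}(A_E)_{vv},
\]
which is precisely the expression found above for the signed elementary product.

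I expect the main obstacle to be making the structural claim fully rigorous — verifying that a vertex-disjoint set of circuits inducing $\rho$ decomposes precisely into one circuit per nontrivial cycle of $\rho$ plus an arbitrary, independent choice of loops at the fixed points of $\rho$. This is the step where the definitions of directed cycle, vertex-disjointness, and $\rho_\mathcal{C}$ must all be combined carefully; once it is established, the sign bookkeeping via the preceding lemma and the subset expansion of $\prod_{v\in F}(1-(A_E)_{vv})$ are routine, and the two sides visibly coincide.
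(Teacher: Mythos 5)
Your proof is correct, but it takes a genuinely different route from the paper's. The paper proves the lemma by induction on the number of cycles in the disjoint-cycle decomposition of $\rho$: the identity permutation is handled by a sub-induction on the number of vertices, a single $n$-cycle is a second base case, and the inductive step deletes the vertices of one cycle $\sigma$ and compares the resulting graph $F$ with $E$, tracking the sign change $(-1)^{|\sigma|+1}$ via the transposition-count lemma. You instead give a direct, non-inductive computation: you establish a structural bijection --- every $\mathcal{C} \in \mathscr{C}_\rho$ is exactly one circuit realizing each nontrivial cycle of $\rho$ together with loops at an arbitrary subset $S$ of the fixed-point set $F$, contributing $m + |S|$ circuits --- and then match the resulting sum over $S$ against the subset expansion of $\prod_{v \in F}(1-(A_E)_{vv})$ inside the signed elementary product, invoking the paper's lemma only for the single identity $\operatorname{sgn}(\rho)(-1)^{n-k} = (-1)^m$. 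What your approach buys is that all the combinatorics is concentrated in one transparent decomposition, with no induction and no relabeling of vertices; what the paper's induction buys is that each step only requires the local observation about extending sets of circuits by one loop or one cycle, deferring the global structure. The structural claim you flag as the main obstacle does go through: by the definition of a directed cycle ($s(e_i) \neq s(e_j)$ for $i \neq j$) and vertex-disjointness, at most one edge of $\bigcup \mathcal{C}$ has any given source, so for a nontrivial cycle $(v_1 \cdots v_j)$ of $\rho$ the edges $v_i \to v_{i+1}$ are uniquely determined and are forced, link by link, into a single circuit of $\mathcal{C}$ equal to $\{e_1,\dots,e_j\}$, while any one-edge circuit is a loop at a fixed point; conversely any such assembly is vertex-disjoint and induces $\rho$, and it correctly yields the count $\prod_{v \notin F}(A_E)_{v,\rho(v)} \prod_{v \in S}(A_E)_{vv}$ (both sides vanishing together when some required edge is absent). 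Your argument is a sound alternative proof of the lemma.
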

\begin{proof}
At the top-level, we proceed by induction on the number of cycles in the expression of
$\rho$ as a product of disjoint cycles.  There are two base cases, followed by the
inductive case.

\textbf{Base case 1: Identity permutation.}  The first base case for the induction
is the case in which $\rho$ has no disjoint cycles.  In other words, $\rho$ is the
identity permutation on the vertices of $E$.  Note first of all that the identity
permutation is always even, so the signed elementary product is
\[\prod_{i=1}^n (I - A_E)_{ii} = \prod_{i=1}^n (1 - (A_E)_{ii})\]

We now perform a sub-induction by the number of vertices in $E$.  Suppose $E$ has only one
vertex.  Then $\mathscr{C}_\rho^e$ is precisely $\{ \emptyset \}$, and $\mathscr{C}_\rho^o$ is
precisely $\{ \{ e \} | \textrm{$e$ is a loop in $E$} \}$.  There are $(A_E)_{11}$ loops at the
sole vertex of $E$, so
\[|\mathscr{C}_\rho^e| - |\mathscr{C}_\rho^o| = 1 - (A_E)_{11}\]

For the inductive step of this sub-induction, we assume from the inductive hypothesis that
the conclusion is true for $\rho$ as the identity and $E$ having $n$ vertices.  Now
suppose $E$ has $n+1$ vertices.  Then $I - A_E$ looks like
\[
\left[\begin{array}{ccc|c}
1 - (A_E)_{11} &        &                &         \\
               & \ddots &                &  \vdots \\
               &        & 1 - (A_E)_{nn} &         \\
\hline
               & \cdots &                & 1 - (A_E)_{n+1,n+1}
\end{array}\right]
\]

Consider the subgraph $F$ of $E$ obtained by dropping vertex $n+1$ and its adjacent
edges.  Then $\rho$ restricted to the vertices of $F$ is still an identity
permutation, and $F$ has $n$ vertices.  Therefore, the inductive hypothesis can be
applied, yielding
\[|\mathscr{C}_{\rho|_F}^e| - |\mathscr{C}_{\rho|_F}^o|
  = \prod_{i=1}^n (1 - (A_F)_{ii})
  = \prod_{i=1}^n (1 - (A_E)_{ii})\]

Each vertex-disjoint set of circuits in $F$ can be extended to $E$ in two ways.  It can be
left as is, or it may be extended with a single loop at vertex $n+1$.  Furthermore,
since sets of circuits that induce the identity permutation may only contain loops, any
vertex-disjoint set of circuits that induces the identity permutation can be obtained in
this way from some such set in $F$, and that subset will induce the identity permutation
in $F$.  Therefore, each vertex-disjoint set of circuits in $F$ that induces the identity
permutation corresponds to one such set of circuits in $E$ with the same parity (obtained
by simply treating the existing set as a set in $E$), and $(A_E)_{n+1,n+1}$ sets of opposite
parity (obtained by adding a loop at vertex $n+1$).  Therefore,
\[
|\mathscr{C}_\rho^e| - |\mathscr{C}_\rho^o|
  = (1 - (A_E)_{n+1,n+1}) \prod_{i=1}^n (1 - (A_E)_{ii})
  = \prod_{i=1}^{n+1} (1 - (A_E)_{ii})
\]
This is the elementary product of $I - A_E$ corresponding to the identity permutation, so
this completes the induction.

\textbf{Base case 2: $n$-Cycle permutation.}  The second base case for the
induction is for a permutation $\rho$ consisting of a single cycle that contains all
vertices of the graph.  Note that in this case, the permutation fixes no vertices and
can be built from $n-1$ transpositions, so the signed elementary product is
\[
    (-1)^{n-1} \prod_{i=1}^n -(A_E)_{i,\rho(i)}
  = (-1)^{n-1} (-1)^n \prod_{i=1}^n (A_E)_{i,\rho(i)}
  = -\prod_{i=1}^n (A_E)_{i,\rho(i)}
\]
since exactly one of $n$ or $n-1$ will be even.

A vertex-disjoint set of circuits of $E$ that induces an $n$-cycle permutation must
contain only one circuit, which includes all vertices of $E$.  The number of ways to choose
such a cycle is precisely $\prod_{i=1}^n (A_E)_{i,\rho(i)}$, and they will all have size
$1$, which is an odd number. Therefore, the number of even-sized vertex-disjoint set of
circuits that induce $\rho$ minus the number of odd-sized such sets is
\[ 0 - \prod_{i=1}^n (A_E)_{i,\rho(i)} \]
which is precisely the same as the signed elementary product.

\textbf{Inductive case.} Now suppose $\rho$ is not the identity permutation,
and does not consist of a single $n$-cycle.  Then there exists, in the expression of
$\rho$ as a product of disjoint cycles, a cycle $\sigma$ that does not include all
vertices of $E$.  Let $F$ be the subgraph of $E$ obtained by deleting the vertices
of $\sigma$ and all adjacent edges.  Note that $\rho|_F$ is a permutation whose
expression as a product of disjoint cycles contains one fewer cycle than $\rho$.
Therefore, the inductive hypothesis applies to $\rho|_F$, giving that
$|\mathscr{C}_{\rho|_F}^e| - |\mathscr{C}_{\rho|_F}^o|$ is the signed elementary
product of $A_F$ corresponding to $\rho|_F$.

Each vertex disjoint set of circuits in  $\mathscr{C}_{\rho|_F}$ can be extended to
a vertex-disjoint set of circuits in $\mathscr{C}_{\rho}$ by adding a circuit that
induces $\sigma$.  Rearranging the vertices if needed so that $\sigma$ moves vertices
$n-|\sigma|+1$ through $n$, the number of ways of doing this is
$\prod_{i = n-|\sigma|+1}^n (A_E)_{i,\rho(i)}$.  Additionally, adding one new circuit
changes the parity of the size of each vertex-disjoint set of circuits, so that
\[
|\mathscr{C}_{\rho}^e| - |\mathscr{C}_{\rho}^o|
= - \left(\prod_{i = n-|\sigma|+1}^n (A_E)_{i,\rho(i)}\right)
    \left(|\mathscr{C}_{\rho|_F}^e| - |\mathscr{C}_{\rho|_F}^o|\right)
\]

Applying the conclusion above from the inductive hypothesis,
\[
|\mathscr{C}_{\rho}^e| - |\mathscr{C}_{\rho}^o|
= - \left(\prod_{i = n-|\sigma|+1}^n (A_E)_{i,\rho(i)}\right)
    \left(\mathrm{sgn}(\rho|_F) \prod_{i=1}^{n-|\sigma|} (I - A_E)_{i,\rho(i)}\right)
\]

Note that $\rho$ does not fix any of the vertices from $n - |\sigma| + 1$ to $n$, since
they are part of the cycle $\sigma$.  Therefore, $(I - A_E)_{i,\rho(i)} = -(A_E)_{i,\rho(i)}$
for $i$ in that range.  Therefore, this can simplified to
\[
|\mathscr{C}_{\rho}^e| - |\mathscr{C}_{\rho}^o|
= \mathrm{sgn}(\rho|_F) (-1)^{|\sigma| + 1} \left(\prod_{i=1}^n (I - A_E)_{i,\rho(i)}\right)
\]

This has the same magnitude as the signed elementary product.  To show that the sign is
correct, recall from the earlier lemma that $\rho$ is a product of $n - (k + m)$ transpositions,
where $k$ is the number of fixed points of $\rho$, and $m$ is the number of disjoint cycles.
Similarly, $\rho|_F$ has $|\sigma|$ fewer vertices than $\rho$, the same number of fixed points,
and one fewer cycle, so it is a product of $n - |\sigma| - (k + (m - 1))$ transpositions.
Therefore, the number of transpositions differ by $|\sigma| - 1$, which has the same parity as
the $|\sigma| + 1$ above.

Applying induction a final time, we conclude that the lemma holds for all $\rho$.
\end{proof}

We've reached the main result.

\begin{thm}\label{mainthm}
Let $E$ be a finite directed graph, and $A_E$ be its adjacency matrix.  Then $\mathcal{PS}(E)$
is the number of even-sized vertex-disjoint sets of circuits of $E$ minus the number of
odd-sized vertex-disjoint sets of circuits of $E$.
\end{thm}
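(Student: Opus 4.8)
The plan is to obtain the theorem as an immediate corollary of the preceding lemma by summing over all permutations of $E^0$. Recall that the determinant of the $n \times n$ matrix $I - A_E$ expands, by the Leibniz formula, as a sum of signed elementary products, one for each permutation $\rho$ of $\{1, \dots, n\} \cong E^0$:
\[
\mathcal{PS}(E) = \det(I - A_E) = \sum_{\rho \in S_n} \mathrm{sgn}(\rho) \prod_{i=1}^n (I - A_E)_{i,\rho(i)}.
\]
The lemma identifies the $\rho$-th term of this sum with $|\mathscr{C}_\rho^e| - |\mathscr{C}_\rho^o|$, so it suffices to show that summing these quantities over all $\rho \in S_n$ gives exactly the number of even-sized vertex-disjoint sets of circuits of $E$ minus the number of odd-sized such sets.

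The key observation for the summation step is that $\sim_\rho$ partitions the collection of all vertex-disjoint sets of circuits of $E$ into classes indexed by the permutations $\rho$ they induce: every vertex-disjoint set of circuits $\mathcal{C}$ induces exactly one permutation $\rho_{\mathcal{C}}$ (well-defined by vertex-disjointness, as noted after the definition of $\rho_\mathcal{C}$), and conversely $\mathscr{C}_\rho$ is by definition the set of all such $\mathcal{C}$ with $\rho_{\mathcal{C}} = \rho$. Hence the collections $\{\mathscr{C}_\rho\}_{\rho \in S_n}$ are pairwise disjoint and their union is the set of all vertex-disjoint sets of circuits of $E$. The only subtlety to flag is that the parity in the lemma is counted by the number of \emph{cycles} in the induced permutation, i.e.\ the number of circuits in $\mathcal{C}$ having more than one edge, whereas the theorem speaks of the parity of the size of $\mathcal{C}$ as a set of circuits; I would point out that loops (single-edge circuits) contribute fixed points to $\rho_\mathcal{C}$, so they are invisible to "number of cycles of $\rho_\mathcal{C}$" — but a moment's reflection shows this is not actually an issue, because the lemma's statement is about $\mathscr{C}_\rho$ for a \emph{fixed} $\rho$, and within the proof of the lemma the base cases and inductive case already account for loops correctly, so "even/odd number of cycles" there really does track the parity of the full set $\mathcal{C}$. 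To be safe I would either verify this reading is consistent with the lemma's proof or simply cite the lemma's bookkeeping directly.

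Assembling the pieces, I would write:
\[
\mathcal{PS}(E) = \sum_{\rho \in S_n} \mathrm{sgn}(\rho) \prod_{i=1}^n (I - A_E)_{i,\rho(i)}
= \sum_{\rho \in S_n} \left( |\mathscr{C}_\rho^e| - |\mathscr{C}_\rho^o| \right)
= \sum_{\rho \in S_n} |\mathscr{C}_\rho^e| - \sum_{\rho \in S_n} |\mathscr{C}_\rho^o|,
\]
where the first equality is the Leibniz expansion, the second is the lemma applied termwise, and the third is just rearranging a finite sum. Since $\{\mathscr{C}_\rho^e\}_\rho$ are disjoint and exhaust all even-sized vertex-disjoint sets of circuits, and likewise $\{\mathscr{C}_\rho^o\}_\rho$ exhaust the odd-sized ones, the two sums on the right are respectively the total number of even-sized and odd-sized vertex-disjoint sets of circuits of $E$, which is the claim.

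I do not expect any serious obstacle: all the real work is done in the lemma. The one thing requiring care is the parity-convention check described above — making sure that the lemma's "even/odd number of cycles" matches the theorem's "even/odd size of the set of circuits." If there is a genuine mismatch for sets containing loops, the fix is to observe that a vertex-disjoint set of circuits inducing the identity on a singleton consists of either zero or one loop, and more generally the inductive structure of the lemma's proof already processes loops one at a time with the correct parity flip, so the identification is sound; I would add a sentence to that effect rather than reproving anything.
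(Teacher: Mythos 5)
Your proposal is correct and follows essentially the same route as the paper: expand $\det(I-A_E)$ by the Leibniz formula, apply the lemma termwise, and observe that the classes $\mathscr{C}_\rho$ partition all vertex-disjoint sets of circuits, so summing over $\rho$ yields the global even-minus-odd count. Your side remark about the parity convention is a reasonable caution about the lemma's wording, and your resolution (that the lemma's bookkeeping tracks the size of the set $\mathcal{C}$ itself, loops included) matches how the paper uses it.
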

\begin{proof}
For each $\rho \in S_n$, the signed elementary product corresponding to $\rho$ is equal
to $|\mathscr{C}_{\rho}^e| - |\mathscr{C}_{\rho}^o|$, which is the desired number for
those vertex-disjoint sets of circuits within the equivalence class of $\sim_\rho$ that
induces $\rho$.  Since each vertex-disjoint set of circuits induces some permutation, adding
the signed elementary products for all $\rho \in S_n$ gives both the desired subtraction and
the $\det(I-A_E) = \mathcal{PS}(E)$.
\end{proof}

We note, as an intuitive application of this result, that the information contained in the
Parry-Sullivan number of a graph relates to the manner in which the circuits of the graph share
vertices.  In particular, suppose a graph $E$ contains $n$ distinct circuits, and consider all
of the possible sets of circuits of $E$, without regard to whether they are vertex-disjoint.
As an easy application of the binomial theorem, the number of even-sized sets of circuits minus
the number of odd-sized sets of circuits is equal to
\[ \sum_{i=0}^n \binom{n}{i} (-1)^i = (1-1)^n = 0. \]
Therefore, when the Parry-Sullivan number of a graph is non-zero, it is because certain of these
circuits shared vertices.  The Parry-Sullivan number can be viewed as a measure of the manner in
which this sharing of vertices occurs.

\begin{ex}
Suppose $E$ is the graph
\[\xymatrix{
&  \bullet \ar@/^/[dl]|{e_1} \ar@/^/[dr]|{e_3}               \\
   \bullet   \ar@/^/[ur]|{e_2} \ar@(l,d)|{e_5}  \ar[rr]|{e_7}
&& \bullet   \ar@/^/[ul]|{e_4} \ar@(r,d)|{e_6}               \\
}\]

The Parry Sullivan number of $E$ can be computed as \[ \mathcal{PS}(E) = \det(I-A_E) =
\det \left(\begin{array}{rrr} 1 & -1 & -1 \\ -1 & 0 & -1 \\ -1 & 0 & 0
\end{array}\right) = -1.\]  Alternatively, the same number can be computed via theorem
\ref{mainthm}.  Although circuits are sets of edges, we find it less cumbersome to
write them as cycles, simply keeping in mind that they are not distinguished by the
starting vertex.  Then the even-sized vertex-disjoint sets of circuits are: $\emptyset$,
$\{ e_5, e_3e_4 \}$, $\{ e_6, e_1e_2 \}$, and $\{ e_5, e_6 \}$.  The odd vertex-disjoint
sets of circuits are: $\{ e_1e_2 \}$, $\{ e_3e_4 \}$, $\{ e_5 \}$, $\{ e_6 \}$, and $\{ e_1e_7e_4 \}$.
Therefore, $\mathcal{PS}(E) = 4 - 5 = -1$.
\end{ex}

As one easy consequence of this result, we conclude by re-establishing a (well-known) fact about
the Parry-Sullivan number of a graph: namely, that the elimination of sources and sinks from the
graph does not change the Parry-Sullivan number.

\begin{cor}
Let $E$ be a graph in which $v$ is a source or a sink.  Let $E_{\backslash v}$ be the graph
obtained by deleting $v$ and any adjacent edges from $E$.  Then the Parry-Sullivan number
of $E$ and $E_{\backslash v}$ are the same.
\end{cor}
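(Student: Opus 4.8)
The plan is to deduce the corollary directly from Theorem~\ref{mainthm}, by showing that deleting a source or a sink leaves the collection of vertex-disjoint sets of circuits unchanged, so that the ``even minus odd'' count is literally the same for $E$ and $E_{\backslash v}$.

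First I would record the elementary observation that a source or a sink cannot lie on any directed cycle. If $v$ is a sink, then there is no edge $e$ with $s(e) = v$; but every vertex appearing on a directed cycle $e_1 e_2 \cdots e_m$ is the source of one of the $e_i$, so $v$ lies on no cycle. Dually, if $v$ is a source there is no edge with $r(e) = v$, and every vertex on a cycle is also the range of one of its edges, so again $v$ lies on no cycle.

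Next I would use this to identify the circuits of $E$ with those of $E_{\backslash v}$. If $C$ is a circuit of $E$, the cycle into which its edges arrange visits only vertices lying on a directed cycle, hence never visits $v$; in particular $C$ contains no edge incident to $v$, so $C$ is already a set of edges of $E_{\backslash v}$ and is a circuit there. Conversely, any circuit of $E_{\backslash v}$ is a circuit of $E$, since $E_{\backslash v}$ is a subgraph of $E$ with the same range and source maps. Hence $E$ and $E_{\backslash v}$ have exactly the same circuits, and therefore exactly the same vertex-disjoint sets of circuits, with the same sizes and parities.

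Finally, applying Theorem~\ref{mainthm} to each graph, the number of even-sized vertex-disjoint sets of circuits minus the number of odd-sized ones is the same for $E$ and for $E_{\backslash v}$, so $\mathcal{PS}(E) = \mathcal{PS}(E_{\backslash v})$. I do not expect a genuine obstacle here; the only point needing a little care is the claim that no circuit of $E$ can use an edge incident to $v$, which is exactly where the hypothesis that $v$ is a source or a sink (rather than an arbitrary vertex) is used.
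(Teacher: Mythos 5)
Your proposal is correct and follows essentially the same route as the paper: observe that a source or sink lies on no directed cycle, hence the vertex-disjoint sets of circuits of $E$ and $E_{\backslash v}$ coincide, and then apply Theorem~\ref{mainthm}. Your version merely spells out in slightly more detail why $v$ cannot lie on a cycle, a point the paper states without elaboration.
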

\begin{proof}
Suppose $\mathcal{C}$ is a vertex-disjoint set of circuits in $E$.  Then no circuit in
$\mathcal{C}$ may contain any edge adjacent to $v$, because $v$ does not lie in a cycle.
Therefore, $\mathcal{C}$ is also a vertex-disjoint set of circuits in $E_{\backslash v}$.
Conversely, if $\mathcal{C}$ is a vertex-disjoint set of circuits in $E_{\backslash v}$,
then every edge of $E_{\backslash v}$ also occurs in $E$, and so $\mathcal{C}$ is also a
vertex-disjoint set of circuits in $E$.  Since $E$ and $E_{\backslash v}$ have the same
vertex-disjoint sets of circuits, their Parry-Sullivan numbers are the same.
\end{proof}

\section*{Acknowledgements}

The author wishes to thank Gene Abrams for his invaluable suggestions and careful reading during the
preparation of this note.

\end{document}